\renewcommand*\env@matrix[1][*\c@MaxMatrixCols c]{%
  \hskip -\arraycolsep
  \let\@ifnextchar\new@ifnextchar
  \array{#1}}
  \def\cup{\cupprod}
  \def\bigcup{\bigcupprod}
  \def\bigcupdisjoint{\mathop{\kern10pt\raisebox{4pt}{$\cdot$}\kern-12pt\bigcup}\limits}
	\let\wtilde\widetilde
\let\epsilon\varepsilon
\DeclareMathOperator{\supp}{supp}
\DeclareMathOperator{\dist}{dist}
\numberwithin{equation}{section}
\newtheoremstyle{ttheorem}%
       {1.8ex\@plus1ex}                
       {2.1ex\@plus1ex\@minus.5ex}      
       {\itshape}           
       {0pt}                   
       {\bfseries}          
       {.}                  
       {.5em}               
       {}                
\newtheoremstyle{ddefinition}%
       {1.8ex\@plus1ex}                
       {2.1ex\@plus1ex\@minus.5ex}      
       {}           
       {0pt}                   
       {\bfseries}           
       {.}                  
       {.5em}               
       {}                
\newtheoremstyle{rremark}%
       {1.8ex\@plus1ex}                
       {2.1ex\@plus1ex\@minus.5ex}      
       {\normalfont}        
       {0pt}                   
       {\bfseries}           
       {.}                  
       {.5em}               
       {}                   
\theoremstyle{ttheorem}
\newtheorem{theorem}{Theorem}[section]
\newtheorem{lemma}[theorem]{Lemma}
\newtheorem{proposition}[theorem]{Proposition}
\newtheorem{corollary}[theorem]{Corollary}
\theoremstyle{ddefinition}
\newtheorem{definition}[theorem]{Definition}
\theoremstyle{rremark}
\newtheorem{myremarks}[theorem]{Remarks}
\newtheorem{myexamples}[theorem]{Examples}
\newtheorem{example}[theorem]{Example}
\newenvironment{remarks}{\begin{myremarks}\begin{nummer}}%
    {\end{nummer}\end{myremarks}}
    {\end{nummer}\end{myexamples}}
\newcounter{numcount}
\newcommand{\labelnummer}{(\roman{numcount})}%
\providecommand{\showkeyslabelformat}[1]{\relax}        
\let\mysaveformat\showkeyslabelformat                   %
\def\myformat#1{\raisebox{-1.5ex}{\mysaveformat{#1}}}   %
\newenvironment{nummer}%
  {\let\curlabelspeicher\@currentlabel%
    \begin{list}{\textup{\labelnummer}}%
      {\usecounter{numcount}\leftmargin0pt%
        \topsep0.5ex\partopsep2ex\parsep0pt\itemsep0ex\@plus1\p@%
        \labelwidth2.5em\itemindent3.5em\labelsep1em%
      }%
    \let\saveitem\item%
    \def\item{\saveitem%
      \def\@currentlabel{\curlabelspeicher\kern.1em\labelnummer}}%
    \let\savelabel\label%
    \def\label##1{{\ifnum\thenumcount=1\let\showkeyslabelformat\myformat\fi\savelabel{##1}}%
										{\def\@currentlabel{\labelnummer}%
									 	\let\showkeyslabelformat\@gobble
									 	\savelabel{##1item}%
										}%
	   							}%
  }{\end{list}}%
\let\curlabelspeicher\@currentlabel%
    \let\saveitem\item%
    \def\item{\saveitem%
      \def\@currentlabel{\curlabelspeicher\kern.1em\labelnummer}}%
    \let\savelabel\label%
    \def\label##1{{\ifnum\thenumcount=1\let\showkeyslabelformat\myformat\fi\savelabel{##1}}%
										{\def\@currentlabel{\labelnummer}%
									 	\let\showkeyslabelformat\@gobble
									 	\savelabel{##1item}%
										}%
    							}%
\let\OldItem\item
\newcommand{\MyItem}[2][]{}%
\def\section{\@startsection{section}{1}%
  \z@{1.3\linespacing\@plus\linespacing}{.5\linespacing}%
  {\normalfont\bfseries\centering}}
\def\subsection{\@startsection{subsection}{2}%
  \z@{.8\linespacing\@plus.5\linespacing}{-1em}%
  {\normalfont\bfseries}}
\def\nlsubsection{\@startsection{subsection}{2}%
  \z@{.8\linespacing\@plus.5\linespacing}{.1ex}%
  {\normalfont\bfseries}}
\let\@afterindenttrue\@afterindentfalse%
\renewenvironment{proof}[1][\proofname]{\par \normalfont
  \topsep6\p@\@plus6\p@ \trivlist 
  \item[\hskip\labelsep\scshape
    #1\@addpunct{.}]\ignorespaces
}{%
  \qed\endtrivlist
}
\def\ps@firstpage{\ps@plain
  \def\@oddfoot{\normalfont\scriptsize \hfil\thepage\hfil
     \global\topskip\normaltopskip}%
  \let\@evenfoot\@oddfoot
  \def\@oddhead{
    \begin{minipage}{\textwidth}
      \normalfont\scriptsize
      \emph{\insertfirsthead}
    \end{minipage}}
  \let\@evenhead\@oddhead 
}
\def\insertfirsthead{}
\def\@cite#1#2{{%
 \m@th\upshape\mdseries[{#1}{\if@tempswa, #2\fi}]}}
\newcommand{\C}{\mathbb{C}}
\newcommand{\N}{\mathbb{N}}
\newcommand{\R}{\mathbb{R}}
\newcommand{\Z}{\mathbb{Z}}
\renewcommand{\leq}{\leqslant}
\renewcommand{\geq}{\geqslant}
\providecommand{\wtilde}[1]{\widetilde{#1}}
\newcommand{\upd}{\mathrm{d}}
\renewcommand{\d}{\upd}   
\newcommand{\hairspace}{\kern .04167em}
\newcommand{\beq}{\begin{equation}}
\newcommand{\eeq}{\end{equation}}
\begin{document}

\title[Dirichlet-Neumann bracketing for Toeplitz matrices]{Dirichlet-Neumann bracketing for a class of  banded Toeplitz matrices}

\author[M.\ Gebert]{Martin Gebert}
\address[M.\ Gebert]{Mathematisches Institut,
  Ludwig-Maximilians-Universit\"at M\"unchen,
  Theresienstra\ss{e} 39,
  80333 M\"unchen, Germany}

\email{gebert@math.lmu.de}

\maketitle

\begin{abstract}
We consider boundary conditions of self-adjoint banded Toeplitz matrices.
We ask if boundary conditions exist for banded self-adjoint Toeplitz matrices which satisfy operator inequalities of Dirichlet-Neumann bracketing type. For a special class of banded Toeplitz matrices including integer powers of the discrete Laplacian we find such boundary conditions. Moreover, for this class we give a lower bound on the spectral gap above the lowest eigenvalue. 
\end{abstract}

\section{Introduction and result}
In this note we are concerned with self-adjoint banded Toeplitz matrices. Let $\mathbb T := (0,2\pi]$ and $L\in\N$. We consider symbols of the form
\beq\label{symbol}
	f:\mathbb T\to\R, \quad f(x) = \sum_{k=-N}^N a_k e^{-i kx}
\eeq
for some $N\in\N$, $a_k\in\C$ with $a_k = \overline{a}_{-k}\in\C$ for $k=-N,...,N$.
These give rise to self-adjoint banded Toeplitz matrices given by the sequence $...,0,a_{-N},...,a_0,...,a_N,0,...$
and $T_{f,L}$ is the corresponding $L\times L$ Toeplitz matrix
\beq
	T_{f,L} =
	 \begin{pmatrix}
		a_0 & a_1 & \cdots &a_N & &    &  & \\
		&\ddots	&  & &\ddots  & &\\
		& a_{-N} & \cdots & a_0 & \cdots & a_N \\
		& &\ddots	&  & &\ddots  & &\\
		 & &  &    a_{-N} & \cdots & a_{-1}  & a_{0} 
	\end{pmatrix}.
\eeq
 Throughout we assume that the matrix size $L$ is bigger than the band width $2N+1$. 
Moreover, $T_f$ stands for
the so-called Laurent or bi-infinite Toeplitz matrix 
\beq\label{def:Laurent}
T_f:\ell^2(\Z)\to\ell^2(\Z),\quad 	(T_f b)_n := \sum_{m\in\Z} a_{m-n} b_m
\eeq
where $b = \big( b_m\big)_{m\in\Z}\in \ell^2(\Z)$
and
$a_k =\displaystyle \frac 1 {2\pi} \int_{0}^{2\pi} \d x\, f(x) e^{-i k x}$, $k\in\Z$. We write $(T_f)_{[a,b]}$ for the restriction of $T_f$ to $\ell^2([a,b])\subset \ell^2(\Z)$ for $a,b\in\Z$ with $a<b$. Then $T_{f,b-a+1}$ is the same matrix as $(T_f)_{[a,b]}$ and we use both notations interchangeably. 
 For further reading about banded Toeplitz matrices we refer to \cite{MR2179973}.

We brake $T_{f,L}$ into the direct sum of two Toeplitz matrices
\beq
	T_{f,L_1}\oplus T_{f,L_2} = 
	\begin{pmatrix}[cccc|cccc]
		a_0 & \cdots & a_N &    &  0 & & &			\\
		& \ddots & \ddots  & &\vdots  & &  &				\\
		 & a_{-N} & \cdots   & a_{0} & 0  &\cdots  & \cdots & 0 		\\
		 \hline
		0& \cdots &\cdots  & 0 & a_0 & \cdots & a_N &    			\\
		& & & \vdots & & \ddots & \ddots  & 				\\
		& & & 0 &  &  a_{-N} & \cdots   & a_{0} 		\\
	\end{pmatrix}
\eeq
where $L_1, L_2\in\N$ with $L_1+ L_2 = L$ and we assume for convenience that $L_1,L_2\geq 2N+1$. It is clear that the difference $T_{f,L} - T_{f,L_1}\oplus T_{f,L_2}$ is of no definite sign and therefore no operator inequality between the two operators $T_{f,L}$ and $T_{f,L_1}\oplus T_{f,L_2}$ holds. 

We are interested in adding boundary conditions to $T_{f,L_1}$ and $T_{f,L_2}$ which overcome this lack of monotonicity.
For a banded Toeplitz matrix with band size $2N+1$ boundary conditions refer to adding Hermitian $N\times N$ matrices at the corner of the respective boundary, i.e. a boundary condition $\star$ is given by a Hermitian $N\times N$ matrix $B_\star$ and 
 \beq\label{def:bc}
	 T^{\star,0}_{f,L} := 
	T_{f,L} +
	\begin{pmatrix}
		B_\star  & 0 \\
		0 & 0
	\end{pmatrix}
	\quad\text{and}\quad
	T^{0,\star}_{f,L} := 
	T_{f,L} +
	\begin{pmatrix}
		0  & 0 \\
		0 & \wtilde B_\star
	\end{pmatrix}
\eeq
where $\wtilde{ B}_\star$ is the reflection of $B_\star$ along the anti-diagonal, i.e. $\wtilde{ B}_\star := U^* B_\star U$ with $U: \C^N\to\C^N$, $ (Ux)_k := x_{N-k+1}$ for $x = (x_1,...,x_N)\in \C^N $. 
The superscript $0$ in the above indicates simple boundary condition at the respective endpoint which refers to no $N\times N$ matrix added. If simple boundary conditions are imposed at both endpoints we drop the $0$ superscripts and note $T^{0,0}_{f,L} = T_{f,L}$.

Our goal is to find boundary conditions $\mathcal N$ and $\mathcal D$ which give rise to a chain of operator inequalities of the form
 \begin{align}\label{intro:D/N}
	T^{0,\mathcal N}_{f,L_1} \oplus T^{\mathcal N,0}_{f,L_2} 
	\leq T_{f,L} \leq  
	T^{0,\mathcal D}_{f,L_1} \oplus T^{\mathcal D,0}_{f,L_2} 
\end{align}
subject to the constraint
\beq\label{intro:D/N:constraint}
	\inf_{\mathbb T} f \leq T^{\mathcal N,\mathcal N}_{f,R} \leq 
	T^{\mathcal D,\mathcal D}_{f,R}  \leq \sup_{\mathbb T} f
\eeq
where $R\in\{L_1,L_2\}$. Inequality \eqref{intro:D/N} is easily satisfied for boundary conditions given by large multiples of the $N\times N$ identity however the non-trivial constraint is \eqref{intro:D/N:constraint} which ensures that the spectra of the restricted operators are subsets of the spectrum of the corresponding infinite-volume operator \eqref{def:Laurent}. We address the question: 

\textit{
Given a banded self-adjoint Toeplitz matrix, do boundary conditions in the sense of \eqref{def:bc} exist such that inequalities \eqref{intro:D/N} and \eqref{intro:D/N:constraint} hold for all $L_1,L_2\in\N$ with $L = L_1 + L_2$ and $L_1,L_2$ greater than the band width?
}

Throughout we mainly focus on the boundary condition $\mathcal N$ and later on find boundary conditions $\mathcal N$ for a special class of banded Topelitz matrices  which satisfy the respective inequalities in \eqref{intro:D/N} and \eqref{intro:D/N:constraint}. We don't know if the answer to the above question remains yes for general banded self-adjoint Toeplitz matrices.

 A chain of inequalities of the form \eqref{intro:D/N} and \eqref{intro:D/N:constraint} is referred to as Dirichlet-Neumann bracketing.
This stems from the following: 
For the continuous negative Laplace operator Dirichlet and Neumann boundary conditions naturally satisfy the operator inequality \eqref{intro:D/N}, see e.g. \cite[Sec. XIII]{MR0493421}. Inspired by the continuous definition, this was later extended to the discrete Laplacian as well \cite[Sec. 5.2]{MR2509110}. In both cases an inequality of the form \eqref{intro:D/N} is by now a standard tool in mathematical physics and was, for example, used in the proof of Lifshitz tails for random Schr\"odinger operators \cite{MR784931,MR2509110,MR2307751} and Weyl asymptotics for continuum Schr\"odinger operators \cite[Sec. XIII]{MR0493421}. 

It might be tempting to think the natural Neumann boundary condition for $T_{f,L}$ satisfies the first inequality in \eqref{intro:D/N}. This boundary condition, which we denote by the superscript $N$, is given by the Toeplitz-plus-Hankel matrix 
\beq\label{Neumann}
	T_{f,L}^{N,0} = T_{f,L} + 
	\begin{pmatrix}
		a_{-1} & \cdots & a_{-N} & \cdots \\
		\vdots & \reflectbox{$\ddots$} \\
		a_{-N} &  &  0 \\
		\vdots & & & \ddots
	\end{pmatrix},
\eeq
see e.g. \cite{MR1718798}. Here we abuse notation a little as the superscript $N$ for Neumann boundary condition has nothing to do with the subscript $N$ indicating the band width of the matrix. 
Except in the case of a self-adjoint $3$-diagonal Toeplitz matrix this boundary condition does not satisfy $T^{0,N}_{L_1,f} \oplus T^{N,0}_{L_2,f} \leq T_{L,f}$. To see this, we consider the square of the negative discrete Laplacian on $\ell^2(\Z)$. Throughout, the negative discrete Laplacian $-\Delta$ is the $3$-diagonal Toeplitz matrix given by the rows $(\cdots,0,-1,2,-1,0,\cdots)$ and therefore $(-\Delta)^2 = \Delta^2$ is $5$-diagonal and given by 
$(\cdots,0,1,-4,6,-4,1,0,\cdots)$. In that case a computation shows that 
\beq\label{Lap2}
	\big(\Delta^2\big)_{L} - \big(\Delta^2\big)^{0,N}_{L_1} \oplus \big(\Delta^2\big)^{N,0}_{L_2}  = 
	\begin{pmatrix}
		\ddots & & & & & \reflectbox{$\ddots$}\\ 
		&0 & -1 & 1 & 0  \\
		&-1 & 4 & -4 & 1 \\
		&1 & -4 & 4 & -1\\
		&0 & 1 & -1 & 0 \\
		\reflectbox{$\ddots$} & & & & & \ddots
	\end{pmatrix}
\eeq
and $0$ everywhere else. This matrix is not of definite sign and therefore the first inequality from the left in \eqref{intro:D/N} does not hold.

In this note we introduce what we call modified Neumann boundary conditions $\mathcal N$ which satisfy the first inequality in \eqref{intro:D/N} and \eqref{intro:D/N:constraint} for Toeplitz matrices given by symbols of the form 
\beq\label{def:symbol}
	f_{E_1,\cdots,E_n,\alpha_1,\cdots,\alpha_n}(x) := f_{E_1}^{\alpha_1}(x)\cdots f_{E_n}^{\alpha_n}(x) = 
	\prod_{i=1}^n \big(2-2\cos(x-E_i)\big)^{\alpha_i}
\eeq
for $x\in \mathbb T$ and some distinct $E_1,...,E_n\in\mathbb T$ and $\alpha_1,...,\alpha_n \in\N$. In the above, we have set
$
f_E^\alpha(x) := \big(2-2\cos(x-E)\big)^{\alpha}.
$
Note that the minimum of $f_{E_1,\cdots,E_n,\alpha_1,\cdots,\alpha_n}$ is $0$ and it is attained at the points $E_1,...,E_n$. 

We remark that $T_{f^1_{E}}$ is a $3$-diagonal Laurent matrix given by rows $(\cdots,0,e^{iE}, 2, e^{-i E},0,\cdots)$ which is unitarily equivalent to the discrete Laplacian $-\Delta$ and we set $-\Delta_E:=T_{f^1_{E}}$. Using this notation, we can write
\beq\label{toeplitz}
	T_{f_{E_1,\cdots,E_n,\alpha_1,\cdots,\alpha_n}} = \prod_{i=1}^n \big(-\Delta_{E_i}\big)^{\alpha_i}
\eeq
which is a banded Laurent matrix with band width $2N+1$ where $N = \sum_{i=1}^n \alpha_i$. 
The main theorem regarding Dirichlet-Neumann bracketing for $T_{f_{E_1,\cdots,E_n,\alpha_1,\cdots,\alpha_n}}$ is the following:

\begin{theorem}\label{thm:main1}
Let $n\in\N$, $E_1,..,E_n\in\mathbb T$ be distinct and $\alpha_1,...,\alpha_n\in\N$. Let $g = f_{E_1,\cdots,E_n,\alpha_1,\cdots,\alpha_n}$ be of the form \eqref{def:symbol} and $N=\sum_{i=1}^n \alpha_i$. Then there exist boundary conditions which we call modified Neumann  and Dirichlet boundary conditions, $\mathcal N$ and $\mathcal D$, such that 
\beq\label{thm:D/N-brack}
	T^{0,\mathcal N}_{g,L_1} \oplus T^{\mathcal N,0}_{g,L_2} 
	\leq T_{g,L} 
	\leq T^{0,\mathcal D}_{g,L_1} \oplus T^{\mathcal D,0}_{g,L_2}
\eeq
and
\beq
	0 = \inf_{\mathbb T} g
	\leq T^{\mathcal N,\mathcal N}_{g,L_1} \oplus T^{\mathcal N,\mathcal N}_{g,L_2} 
	\leq T^{0,\mathcal N}_{g,L_1} \oplus T^{\mathcal N,0}_{g,L_2} 
\eeq
for all $L_1,L_2\in\N$ with $L_1+ L_2 = L$ and $L_1,L_2\geq 2N+1$.
The boundary conditions $\mathcal N$ and $\mathcal D$ are given explicitly in Definition \ref{def:Neumann} below.
\end{theorem}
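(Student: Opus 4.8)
The plan is to exploit the multiplicative structure \eqref{toeplitz} of $T_g$, reducing the general case to the single factor $-\Delta_E$. The key observation is that $-\Delta_E = T_{f_E^1}$ is unitarily equivalent to the discrete Laplacian, and for the discrete Laplacian the natural Dirichlet and Neumann boundary conditions are already known to satisfy \eqref{intro:D/N} and \eqref{intro:D/N:constraint}; see \cite[Sec.\ 5.2]{MR2509110}. Concretely, I would first fix a decomposition point and let $-\Delta_E = A^* A + (\text{corner term})$ in a way that makes $(-\Delta_E)^{0,\mathcal N}_{L_1}\oplus(-\Delta_E)^{\mathcal N,0}_{L_2}\le(-\Delta_E)_L$ with equality in the ``glued'' entries, and simultaneously a factorization $(-\Delta_E)_L = B_L^* B_L$ with $B_L$ the $(L+1)\times L$ (or $(L-1)\times L$) finite-difference matrix, so that $(-\Delta_E)^{\mathcal N,\mathcal N}_{L_i} = \wtilde B_{L_i}^* \wtilde B_{L_i}$ for a smaller difference matrix $\wtilde B$ with Neumann rows removed — this is exactly what gives $0\le(-\Delta_E)^{\mathcal N,\mathcal N}$ and monotonicity under further removal of boundary rows.

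The heart of the argument is then to lift these factorizations through the product. Writing $P_i := -\Delta_{E_i}$, I would define the modified Neumann operator on the interval $[a,b]$ by restricting the full product $\prod (P_i)^{\alpha_i}$ in a way that corresponds to first restricting each difference factor $B^{(i)}$ to the interval with Neumann rows, and then forming the product of the restricted factors; symbolically, if $T_g = C^* C$ with $C := \prod$ (ordered product of the rectangular difference matrices, $N$ of them, one for each unit in $\sum\alpha_i$), then $T^{\mathcal N,\mathcal N}_{g,[a,b]} := (C_{[a,b]})^* C_{[a,b]}$ where $C_{[a,b]}$ is the natural restriction of the rectangular operator $C$ to the relevant finite block of rows and columns. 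Since each $C_{[a,b]}$ is obtained from the next-larger restriction by deleting rows, we immediately get $0\le C_{[a,b]}^* C_{[a,b]}$ and the left-hand monotonicity chain. To produce the boundary-condition \emph{matrices} $B_{\mathcal N}$, $B_{\mathcal D}$ as in \eqref{def:bc}, I would expand $(C_{[a,b]})^* C_{[a,b]} - (T_g)_{[a,b]}$ and check it is supported in the two $N\times N$ corners and is Hermitian — this identifies $\mathcal N$ explicitly, matching Definition \ref{def:Neumann}. The Dirichlet condition $\mathcal D$ is obtained dually: one arranges $T_g^{\mathcal D}$ so that the corner correction has the \emph{opposite} role, e.g.\ by using a factorization with one extra interior difference row retained, giving $T_{g,L}\le T^{0,\mathcal D}_{g,L_1}\oplus T^{\mathcal D,0}_{g,L_2}$; the constraint $T^{\mathcal D,\mathcal D}_{g,R}\le\sup_{\mathbb T} g$ then follows because the Dirichlet restriction is a compression of $T_g$ (min-max), whose spectrum lies in $[\inf f,\sup f]$.

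For the constraint chain \eqref{intro:D/N:constraint}, the inequality $\inf_{\mathbb T} g \le T^{\mathcal N,\mathcal N}_{g,R}$ is the statement $0\le C_R^* C_R$ just established (using $\inf g = 0$), and $T^{\mathcal N,\mathcal N}_{g,R}\le T^{\mathcal D,\mathcal D}_{g,R}\le\sup_{\mathbb T} g$ should come from the two observations that (i) $T^{\mathcal D,\mathcal D}_{g,R}$ is (unitarily equivalent to) a compression of $T_g$ onto a coordinate subspace — hence $\le\sup\spec(T_g)=\sup_{\mathbb T} f$ by min-max and the standard identification of $\spec T_g$ with the range of the symbol — and (ii) $T^{\mathcal D,\mathcal D}_{g,R}-T^{\mathcal N,\mathcal N}_{g,R}$ is the difference of the two corner corrections, which by construction is a sum of two positive-semidefinite $N\times N$ blocks (Dirichlet adds, modified-Neumann subtracts relative to the free restriction).

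The main obstacle I anticipate is \emph{commutativity and ordering in the product factorization}: the factors $-\Delta_{E_i}$ for distinct $E_i$ commute as bi-infinite Laurent operators, but their \emph{restrictions} to a finite interval do not commute, and the boundary corrections generated by restricting a product of rectangular difference operators depend delicately on the order in which one peels off rows from the left versus the right endpoint. Getting a single, symmetric choice of $C_{[a,b]}$ that (a) reproduces $(T_g)_{[a,b]}$ up to genuinely corner-supported Hermitian terms, (b) is consistent at both endpoints, and (c) behaves monotonically under shrinking the interval, is the real content; I expect this to require choosing $C$ as a product of the rectangular ``forward difference'' matrices $\partial_{E}$ (with $\partial_E^*\partial_E = -\Delta_E$ up to a harmless reindexing) all oriented the same way, so that restriction to $\ell^2([a,b])$ of the composite $C = \partial_{E_{i_1}}\cdots\partial_{E_{i_N}}$ is simply the composite of the restricted $\partial$'s on a nested family of intervals, and then verifying by a direct (but bounded-size) computation that the resulting corner blocks are Hermitian and give the claimed sign. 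Once that bookkeeping is done, the two inequality chains in Theorem \ref{thm:main1} fall out of elementary positivity of $C_{[a,b]}^* C_{[a,b]}$ and min-max for compressions.
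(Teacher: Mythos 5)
Your construction of the modified Neumann condition is essentially the paper's: restricting the composite rectangular factor $C=\prod_i D_{E_i}^{\alpha_i}$ by deleting the rows that touch the boundary is the same as writing $T_g=\sum_{k\in\Z}|\psi_k^g\rangle\langle\psi_k^g|$ with $\psi_k^g=C\delta_k$ supported on $[k,k+N]$ (Proposition \ref{thm:main2}) and keeping only the projectors with $[k,k+N]\subset[a,b]$ (Definition \ref{def:Neumann}). The chain $0\le T^{\mathcal N,\mathcal N}_{g,L_1}\oplus T^{\mathcal N,\mathcal N}_{g,L_2}\le T^{0,\mathcal N}_{g,L_1}\oplus T^{\mathcal N,0}_{g,L_2}\le T_{g,L}$ is then nothing but the positivity of the dropped rank-one terms, and the ordering/commutativity problem you anticipate never arises, since one only restricts the composite $C$ (equivalently, the rank-one decomposition), never the individual factors.

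The genuine gap is the Dirichlet half of \eqref{thm:D/N-brack}. ``Retaining one extra interior difference row'' only adds to $T^{\mathcal N}$ further nonnegative blocks supported inside one of the two subintervals, and no block-diagonal operator obtained in this way dominates $T_{g,L}$: the difference $T_{g,L}-\bigl(T^{0,\mathcal N}_{g,L_1}\oplus T^{\mathcal N,0}_{g,L_2}\bigr)$ is the sum of the truncated straddling projectors $P|\psi_k^g\rangle\langle\psi_k^g|P$, whose off-diagonal blocks $P_1|\psi_k^g\rangle\langle\psi_k^g|P_2$ cannot be absorbed by adding rows supported in a single half. The missing idea is the duality of Lemma \ref{Neu=Dir}: set $T^{0,\mathcal D}_{[a,b]}:=2(T_g)_{[a,b]}-T^{0,\mathcal N}_{[a,b]}$ and conjugate the already proven Neumann inequality by the block unitary $\mathrm{diag}(\mathds 1,-\mathds 1)$; equivalently, use $P|\psi\rangle\langle\psi|P\le 2P_1|\psi\rangle\langle\psi|P_1+2P_2|\psi\rangle\langle\psi|P_2$, i.e.\ the factor $2$ in front of the truncated straddling terms is essential and is exactly what your construction lacks. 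Separately, your claim that $T^{\mathcal D,\mathcal D}_{g,R}\le\sup_{\mathbb T}g$ follows by min-max ``because the Dirichlet restriction is a compression of $T_g$'' is incorrect: $T^{\mathcal D,\mathcal D}$ is a compression \emph{plus} a nonnegative corner term, so min-max gives no such upper bound, and the paper explicitly remarks that its $\mathcal D$ does not satisfy this inequality. That claim is not part of Theorem \ref{thm:main1}, so it does not affect the statement to be proved, but it underlines that your route to the upper bound in \eqref{thm:D/N-brack} is not yet a proof.
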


\begin{remarks}
\item 
For band width greater than $3$ the boundary conditions $\mathcal N$ and $\mathcal D$ differ from the Neumann boundary N condition mentioned in \eqref{Neumann} and the Dirichlet boundary condition used in e.g. \cite{MR1718798} which coincides with what we call simple boundary condition.
\item 
It would be desirable to have the inequality $T^{0,\mathcal D}_{f,L_1} \oplus T^{\mathcal D,0}_{f,L_2} \leq \sup_{\mathbb T} f$ as well but our modified Dirichlet boundary condition $\mathcal D$ defined in  Definition \ref{def:Neumann} does not satisfy this. We obtain $\mathcal D$ by a general principle  that any inequality $T^{0,\mathcal N}_{g,L_1} \oplus T^{\mathcal N,0}_{g,L_2} \leq T_{g,L}$ induces  modified Dirichlet boundary condition such that $ T_{g,L} \leq T^{0,\mathcal D}_{g,L_1} \oplus T^{\mathcal D,0}_{g,L_2}$ and vice versa, see Lemma \ref{Neu=Dir}. 
\item 
The theorem holds for any integer power ($m\in\N$) of the discrete Laplacian as the symbol of $(-\Delta)^m:\ell^2(\Z)\to\ell^2(\Z)$ is 
\beq	
	g(x) = f_{0,m}(x) = \big(2-2\cos(x)\big)^m,\quad x\in\mathbb T
\eeq
and is of the form \eqref{def:symbol}. In that case $n=1$, $E_1 =0$ and  $\alpha_1= m$. 
\end{remarks}

Considering only symbols \eqref{def:symbol} seems very restrictive. But, for example, Theorem \ref{thm:main1} gives Dirichlet-Neumann bracketing for a rather large class of $5$-diagonal real-valued Toeplitz matrices:

\begin{corollary}[$5$-diagonal real-valued Toeplitz matrices]\label{corollary}
Let $h:\mathbb T\to \R$ be the symbol
\beq
	h(x) = a_{2} e^{-2 i x} + a_{1} e^{-i x} + a_0 + a_1 e^{i x} + a_2 e^{2 i x} 
\eeq
where $a_0,a_1,a_2\in\R$ with $a_2> 0$ and $-4\leq \frac {a_1}{a_2}\leq 4$. Then there exist modified Neumann  and Dirichlet boundary conditions, $\mathcal N$ and $\mathcal D$, for the Toeplitz matrix $T_{h.L}$ such that 
\beq\label{thm:D/N-brack}
	 T^{0,\mathcal N}_{h,L_1} \oplus T^{\mathcal N,0}_{h,L_2} 
	 \leq T_{h ,L} 
	 \leq T^{0,\mathcal D}_{h ,L_1} \oplus T^{\mathcal D,0}_{h ,L_2}
\eeq
and
\beq
	\inf_{\mathbb T} h 
	\leq T^{\mathcal N,\mathcal N}_{h,L_1} \oplus T^{\mathcal N,\mathcal N}_{h,L_2} 
	\leq T^{0,\mathcal N}_{h,L_1} \oplus T^{\mathcal N,0}_{h,L_2}
\eeq
for all $L_1,L_2\in\N$ with $L_1+ L_2 = L$ and $L_1,L_2\geq 5$.
\end{corollary}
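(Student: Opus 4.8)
The plan is to deduce Corollary~\ref{corollary} from Theorem~\ref{thm:main1} by an affine reparametrisation of the symbol. First I would record the general transfer principle: if $g$ is a symbol and $\star$ is a boundary condition for $g$ given by the Hermitian matrix $B_\star$, then $c_1 B_\star$ defines a boundary condition for the symbol $c_1 g + c_0$ whenever $c_1>0$ and $c_0\in\R$, since the Fourier coefficients of $c_1 g + c_0$ are $c_1 a_k(g)+c_0\delta_{k,0}$ and hence $T^{\star_1,\star_2}_{c_1 g+c_0,R} = c_1\, T^{\star_1,\star_2}_{g,R} + c_0 I_R$ at every matrix size $R$ and every pair of boundary choices. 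Because $X\mapsto c_1 X + c_0 I$ with $c_1>0$ preserves all operator inequalities and because $c_0 I_{L_1}\oplus c_0 I_{L_2}=c_0 I_L$, both chains of inequalities in Theorem~\ref{thm:main1} carry over verbatim from $g$ to $c_1 g + c_0$, with $\inf_{\mathbb T}(c_1 g + c_0)=c_0$ because $\inf_{\mathbb T} g = 0$. So it is enough to write $h = c_1 g + c_0$ with $c_1>0$, $c_0\in\R$ and $g$ a symbol of the form \eqref{def:symbol} with $N=\sum_i\alpha_i = 2$.

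Second, I would perform the elementary trigonometric matching. Setting $t=\cos x\in[-1,1]$ and using $\cos 2x = 2t^2-1$, one has $h(x) = 4a_2 t^2 + 2a_1 t + (a_0 - 2a_2)$, a quadratic in $t$ with positive leading coefficient $4a_2$. A product-to-sum computation gives, for every $E\in\mathbb T$,
\[
	\bigl(2-2\cos(x-E)\bigr)\bigl(2-2\cos(x+E)\bigr) = 4\,(\cos x - \cos E)^2 ,
\]
and the left-hand side is $f_{E,\,2\pi-E,\,1,\,1}(x)$ when $E\in(0,\pi)$ (respectively $f_{2\pi,2}(x)$ when $\cos E=1$ and $f_{\pi,2}(x)$ when $\cos E=-1$), hence in every case a symbol of the form \eqref{def:symbol} with $N=2$. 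Now set $\beta := -a_1/(4a_2)$. The hypothesis $-4\le a_1/a_2\le 4$ is precisely $\beta\in[-1,1]$, so there is $E\in\mathbb T$ with $\cos E=\beta$; put $g(x) := 4(\cos x-\beta)^2$, which is then of the required form, set $c_1 := a_2>0$, and complete the square to obtain $h(x) = 4a_2(t-\beta)^2 + c_0$ with $c_0 := (a_0-2a_2) - 4a_2\beta^2$, i.e.\ $h = c_1 g + c_0$.

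Finally, applying Theorem~\ref{thm:main1} to this $g$ --- for which $N = 2$, so that its hypotheses demand exactly $L_1,L_2\ge 2N+1 = 5$, as in the corollary --- and transferring along $X\mapsto c_1 X + c_0 I$ as above produces the modified Neumann and Dirichlet boundary conditions $\mathcal N$, $\mathcal D$ for $T_{h,L}$ together with the two asserted chains of inequalities, using $\inf_{\mathbb T} h = c_0$ for the lower bound. There is essentially no hard step; the only points that need attention are the degenerate cases $\beta = \pm 1$, where $E$ and $2\pi-E$ coincide (or $2\pi-E\notin\mathbb T$) and one must use the single-point symbol with exponent $\alpha_1=2$ in place of two distinct points, and the routine bookkeeping that a positive rescaling together with the addition of a multiple of the identity genuinely leaves all four operator inequalities --- and the identity $\inf_{\mathbb T} h = c_0$ --- intact.
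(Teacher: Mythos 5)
Your proposal is correct and follows essentially the same route as the paper: write $h$ as a positive multiple of a symbol of the form \eqref{def:symbol} with $N=2$ (the paper matches Fourier coefficients against $w_b(x)=(2-2\cos(x-b))(2-2\cos(x+b))$, you complete the square in $t=\cos x$, which is the same computation since $w_b(x)=4(\cos x-\cos b)^2$) plus a constant, apply Theorem \ref{thm:main1}, and note that the affine map $X\mapsto a_2 X+c\,\mathds 1$ with $a_2>0$ preserves all the operator inequalities and sends $0=\inf w_b$ to $\inf h$. Your explicit treatment of the degenerate cases $a_1/a_2=\pm 4$ (where the two zeros coincide and one must take $n=1$, $\alpha_1=2$) is a small point the paper passes over silently, but otherwise the arguments coincide.
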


The upcoming paper \cite{GRM_work_progress} will heavily rely on the established Dirichlet-Neumann bracketing to prove Lifshitz tails of the integrated density of states for self-adjoint Toeplitz matrices with random diagonal perturbations. Fractional powers of Toeplitz matrices of the form \eqref{toeplitz} serve there as model operators. This is a continuation of our study of Lifshitz tails of randomly perturbed fractional Laplacians in \cite{MR4091563}. Generally, Dirichlet-Neumann bracketing is a common tool in proving Lifshitz tails, see e.g. \cite[Sec. 6]{MR2509110}. 
Another main ingredient and of independent interest is a lower bound on the spectral gap above the ground state energy of Toeplitz matrices with modified Neumann boundary condition. We prove here: 

\begin{proposition}[Spectral gap]\label{proposition:main}
Let $n\in\N$, $E_1,..,E_n\in\mathbb T$ be distinct and $\alpha_1,...,\alpha_n\in\N$. Let $g = f_{E_1,\cdots,E_n,\alpha_1,\cdots,\alpha_n}$ be of the form \eqref{def:symbol} and $N=\sum_{i=1}^n \alpha_i$. We denote by $\lambda^L_1\leq ...\leq \lambda^L_L$ the eigenvalues of 
$T_{g,L}^{\mathcal N,\mathcal N}$ counting multiplicities and ordered increasingly. Then $\lambda^L_k= 0$ for $k=1,...,N$ and there exists $C>0$ such that for all $L\geq 2N+1$ 
\beq
	\lambda^L_{N+1}\geq \frac C {L^{2\alpha_{\text{max}}}},
\eeq
where $\alpha_{\text{max}}:=\max\big\{ \alpha_i:\ {i=1,...,n}\big\}$.
\end{proposition}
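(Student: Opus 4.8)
The plan is to exploit the factorization \eqref{toeplitz}, $T_{g,L}^{\mathcal N,\mathcal N} = \prod_{i=1}^n (-\Delta_{E_i})^{\alpha_i}$ adapted with modified Neumann boundary conditions, together with the non-negativity $T_{g,L}^{\mathcal N,\mathcal N}\ge 0$ from Theorem \ref{thm:main1}. First I would identify the kernel: since $g$ has an $N$-fold zero counted with the multiplicities of $E_1,\dots,E_n$, and since modified Neumann boundary conditions are designed so that polynomial-type "flat" vectors of the appropriate degree sit in the kernel, one checks that $\dim\Ker T_{g,L}^{\mathcal N,\mathcal N} = N$, giving $\lambda^L_1=\dots=\lambda^L_N=0$. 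This should follow directly from the explicit form of the boundary conditions in Definition \ref{def:Neumann} and a dimension count; the kernel is spanned by vectors of the form $(k^j e^{i E_i k})_{k=1}^L$ for $j<\alpha_i$, mirroring the continuum picture where $(-\Delta-E_i)^{\alpha_i}$ annihilates quasi-polynomials.

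\textbf{Reduction to a single factor.} The key analytic step is to bound $\lambda^L_{N+1}$ from below by reducing to the case $n=1$, $\alpha_1 = \alpha_{\text{max}}$. I would use the operator inequality: on the orthogonal complement of $\Ker T_{g,L}^{\mathcal N,\mathcal N}$, one has $\prod_i (-\Delta_{E_i})^{\alpha_i} \ge c\, (-\Delta_{E_{i_0}})^{\alpha_{\text{max}}}$ restricted appropriately, where $i_0$ realizes the maximum, because the remaining factors $(-\Delta_{E_j})^{\alpha_j}$ for $j\ne i_0$ are (after removing their own kernels) bounded below by a positive constant uniformly in $L$ — their spectral gaps above $0$ are themselves of order $L^{-2\alpha_j}$, but on the relevant subspace the geometry works out because the kernels are "transverse." More carefully, by Weyl's inequality / min-max, $\lambda^L_{N+1}(T_{g,L}^{\mathcal N,\mathcal N})$ is controlled by the product of the relevant Rayleigh quotients, and it suffices to show $\langle v, T_{g,L}^{\mathcal N,\mathcal N} v\rangle \ge C L^{-2\alpha_{\text{max}}} \|v\|^2$ for $v\perp \Ker$. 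Here one would test with the explicit factorized structure, writing $T_{g,L}^{\mathcal N,\mathcal N} = P^* P$ for a suitable "square root" operator $P$ built from lower-order Laplacian-type blocks with boundary conditions (available since each $\alpha_i\in\N$), reducing a gap estimate for the product to a gap estimate for a single power of a discrete Laplacian with Neumann-type boundary conditions.

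\textbf{The single-factor gap.} For $(-\Delta_E)^{\alpha}$ with modified Neumann boundary conditions, the first nonzero eigenvalue should behave like $(\pi/L)^{2\alpha}$ up to constants: the $3$-diagonal operator $-\Delta_E$ with standard Neumann boundary conditions has eigenvalues $2-2\cos(\pi j/L)$, $j=0,\dots,L-1$, so its spectral gap above $0$ is $2-2\cos(\pi/L)\asymp L^{-2}$; raising to the $\alpha$-th power and using that modified Neumann boundary conditions are engineered to make $(-\Delta_E^{\mathcal N})^{\alpha}$ comparable to $T_{f_E^\alpha,L}^{\mathcal N,\mathcal N}$ (this comparability being exactly what Definition \ref{def:Neumann} encodes), the gap becomes $\asymp L^{-2\alpha}$. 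I would make this rigorous either by diagonalizing the Neumann-Laplacian block explicitly and tracking the boundary corrections, or by a variational argument: the upper bound on $\lambda^L_{N+1}$ comes from plugging in an explicit near-optimal trial vector (a discrete cosine orthogonalized against the kernel), and the matching lower bound from the factorization $P^*P$ plus the elementary single-Laplacian estimate.

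\textbf{Main obstacle.} The hardest part is the reduction step — controlling the product of several non-commuting (they do commute as Laurent matrices, but the boundary conditions may spoil exact commutation for the finite truncations) Laplacian factors and showing that only the largest power $\alpha_{\text{max}}$ governs the decay rate, i.e. that the "extra" kernels coming from the other $E_j$ do not conspire to lower the gap beyond $L^{-2\alpha_{\text{max}}}$. This requires understanding precisely how the $N$-dimensional kernel decomposes relative to the factorization and showing that on its complement each non-maximal factor contributes a factor bounded below by a constant (not by $L^{-2\alpha_j}$) — which should hold because a vector orthogonal to the full kernel of the product cannot be simultaneously "almost in the kernel" of every factor. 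Making this quantitative, likely via a compactness/perturbation argument comparing $T_{g,L}^{\mathcal N,\mathcal N}$ to the block-diagonal product structure and invoking that the symbol $g$ vanishes to order exactly $\alpha_i$ at $E_i$, is the technical heart of the proof.
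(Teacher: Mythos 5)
Your identification of the kernel is essentially the paper's: the quasi-polynomial vectors $\varphi_{E_i}^{j_i}=(k^{j_i}e^{iE_ik})_{k=1}^L$ are annihilated by every $\psi_k^g$ with $[k,k+N]\subset[1,L]$, and their linear independence (in the paper, via a confluent Vandermonde determinant) gives $\lambda^L_1=\dots=\lambda^L_N=0$. The genuine gap is in your lower bound for $\lambda^L_{N+1}$. Your reduction to a single factor rests on two steps that are not justified and do not follow from the structure you invoke: (i) $T_{g,L}^{\mathcal N,\mathcal N}$ is \emph{not} a product of truncated single-Laplacian Neumann operators -- truncation does not commute with taking products or powers, so the factorization \eqref{toeplitz} does not descend to the finite volume with boundary conditions, and the "comparability" of $\bigl((-\Delta_E)^{\mathcal N}_{L}\bigr)^{\alpha}$ with $T^{\mathcal N,\mathcal N}_{f_E^{\alpha},L}$ is not what Definition \ref{def:Neumann} encodes (the two differ by a boundary perturbation of uncontrolled sign); (ii) there is no min-max or Weyl inequality that controls the $(N+1)$-st eigenvalue of a product by a product of Rayleigh quotients of the factors -- quadratic forms do not multiply, so the claim that each non-maximal factor "contributes a constant" on the complement of the kernel is a heuristic about symbols, not an operator inequality. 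You correctly flag this reduction as the technical heart, but the proposed resolution ("compactness/perturbation argument") is not carried out, and as stated the argument does not close.

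The paper's route avoids products of truncations altogether. From \eqref{def:NN}, the periodic restriction satisfies $T_{g,L}^{\text{per}}-T_{g,L}^{\mathcal N,\mathcal N}=\sum_{k=L-N+1}^{L}|\psi_k^g\>\<\psi_k^g|\ge 0$, a rank-$N$ perturbation; since the lowest $N$ eigenvalues of $T_{g,L}^{\mathcal N,\mathcal N}$ vanish, the min-max principle yields
\begin{equation*}
\lambda^L_{N+1}\bigl(T_{g,L}^{\mathcal N,\mathcal N}\bigr)\;\ge\;\lambda^L_{1}\bigl(T_{g,L}^{\text{per}}\bigr)\;=\;\min_{k=1,\dots,L} g\Bigl(\tfrac{2\pi k}{L}\Bigr),
\end{equation*}
because $T_{g,L}^{\text{per}}$ is diagonalized by discrete plane waves. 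This minimum can be $0$ when a lattice point hits some $E_i$, so the paper exploits the modulation covariance $U_E T_{g,L}^{\mathcal N,\mathcal N}U_E^{*}=T_{g_E,L}^{\mathcal N,\mathcal N}$ to replace $g$ by a shifted symbol, and a pigeonhole lemma (Lemma \ref{lm:distance}) produces a shift $\tilde E$ keeping all points $\frac{2\pi k}{L}-\tilde E$ at distance at least $C_1/L$ from every zero $E_i$; Taylor expansion at the zeros, where $g$ vanishes to order $2\alpha_i\le 2\alpha_{\text{max}}$, then gives $\lambda^L_{N+1}\ge C L^{-2\alpha_{\text{max}}}$. This comparison-plus-shift mechanism (and in particular the need to dodge resonances between the lattice $\frac{2\pi k}{L}$ and the zeros of $g$) is absent from your proposal and is precisely what makes the bound provable without any single-factor reduction.
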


In the case of $T_{g.L}^{N,N}$, i.e the Neumann boundary conditons defined in \eqref{Neumann}, the latter proposition follows rather directly from the explicit diagonalization of $T_{g.L}^{N,N}$, see \cite{MR1718798}. For  the modified Neumann boundary conditions $T_{g,L}^{\mathcal N,\mathcal N}$ it is more complicated as an explicit diagonalization of $T_{g,L}^{\mathcal N,\mathcal N}$ is not known.

\section{Definition of boundary conditions $\mathcal N$ and $\mathcal D$}

The boundary conditions in Theorem \ref{thm:main1} rely on
 a representation of self-adjoint Toeplitz matrices $T_{f_{E_1,\cdots,E_n,\alpha_1,\cdots,\alpha_n}}$
 as a sum of rank-one operators. To see this we write for $E\in\mathbb T$
 \beq
	 -\Delta_E = D_E^* D_E
 \eeq
 where $D_E := T_{h_E}:\ell^2(\Z)\to\ell^2(\Z)$ is the Laurent matrix given by the symbol $h_E:\mathbb T\to \C$, $h_E(x) = 1 - e^{- i E} e^{-ix}$,  i.e.
 \beq
 D_E = 
	 \begin{pmatrix}
		 \ddots & \ddots  & & & & \\
		0  & 1 & e^{-i E} & & &\\
		 & 0 & 1 & e^{-i E} & &\\
		& & 0  & 1 & e^{-i E}  &\\
		 &  & &\ddots  &\ddots 
	 \end{pmatrix}.
 \eeq
 Using this decomposition and \eqref{toeplitz}, we write
 \beq
 	T_{f_{E_1,\cdots,E_n,\alpha_1,\cdots,\alpha_n}} 
	 = \prod_{i=1}^n \big(D^*_{E_i}D_{E_i}\big)^{\alpha_i}
	 = 
	 \Big( \prod_{i=1}^n D_{E_i}^{\alpha_i}\Big)^*\Big( \prod_{i=1}^n D_{E_i}^{\alpha_i}\Big)
 \eeq
where we used that all Laurent matrices commute. We denote by $(\delta_k)_{k\in\Z}$ the standard basis of $\ell^2(\Z)$. 
Inserting the identity $\mathds 1 = \sum_{k\in\Z} \big|\delta_k\big\>\big\<\delta_k\big|$ in the above, we obtain
 \beq
	 T_{f_{E_1,\cdots,E_n,\alpha_1,\cdots,\alpha_n}} 
	 = \sum_{k\in\Z}\Big|\prod_{i=1}^n D_{E_i}^{\alpha_i}\delta_k\Big\>\Big\< \prod_{i=1}^n D_{E_i}^{\alpha_i}\delta_k\Big|
 \eeq
where the above series converge strongly. For $k\in\Z$ we define the vector 
\beq\label{def:psi}
	\psi^g_k := \prod_{i=1}^n D_{E_i}^{\alpha_i}\delta_k = U_k \prod_{i=1}^n D_{E_i}^{\alpha_i}\delta_0
\eeq
whose support satisfies  $\supp \psi_k^g = [k,k+N]\subset \Z$ where $\supp \varphi = \{n\in\Z: \varphi(n)\neq 0\}$ for $\varphi\in\ell^2(\Z)$ and $N = \sum_{i=1}^n \alpha_i$. In the above $U_k:\ell^2(\Z)\to\ell^2(Z)$, $(U_k x)_n = x_{n-k}$, is the right shift by $k\in\Z$. Summarizing the above computation, we have proved the following:

\begin{proposition}\label{thm:main2}
Let $n\in\N$, $E_1,..,E_n\in\mathbb T$ be distinct and $\alpha_1,...,\alpha_n\in\N$. Let $g = f_{E_1,\cdots,E_n,\alpha_1,\cdots,\alpha_n}$ be of the form \eqref{def:symbol}. Then 
\beq
	T_g = \sum_{k\in\Z} \big|\psi^g_k\big\>\<\psi^g_k\big|
\eeq
with $\psi^g_k\in\ell^2(\Z)$ given by \eqref{def:psi}.
\end{proposition}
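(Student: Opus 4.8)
The plan is to establish the factorization $T_g = \big(\prod_{i=1}^n D_{E_i}^{\alpha_i}\big)^*\big(\prod_{i=1}^n D_{E_i}^{\alpha_i}\big)$ and then insert a resolution of the identity to rewrite the right-hand side as a sum of rank-one projectors, exactly as sketched in the paragraph preceding the statement. First I would recall that $f_E^\alpha(x) = \big(2-2\cos(x-E)\big)^\alpha = \abs{1-e^{-iE}e^{-ix}}^{2\alpha} = h_E(x)^\alpha \overline{h_E(x)}^\alpha$, so that as a multiplication operator on $L^2(\mathbb T)$ (equivalently, via the Fourier isometry $\ell^2(\Z)\cong L^2(\mathbb T)$, as a Laurent matrix) we have $T_{f_E^\alpha} = T_{\overline{h_E}^\alpha}T_{h_E^\alpha} = (T_{h_E}^\alpha)^* T_{h_E}^\alpha = (D_E^\alpha)^* D_E^\alpha$, using that the Laurent-matrix (multiplication-operator) calculus is multiplicative: $T_{uv} = T_u T_v$ for bounded symbols $u,v$. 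Taking the product over $i=1,\dots,n$ and using that all Laurent matrices commute (they are simultaneously diagonalized by the Fourier transform) gives $T_g = \prod_i (D_{E_i}^{\alpha_i})^* D_{E_i}^{\alpha_i} = \big(\prod_i D_{E_i}^{\alpha_i}\big)^*\big(\prod_i D_{E_i}^{\alpha_i}\big)$; write $A := \prod_i D_{E_i}^{\alpha_i}$, a bounded banded Laurent matrix, so $T_g = A^*A$.

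Next I would insert the identity $\one = \sum_{k\in\Z}\projector{\delta_k}{\delta_k}$, which converges strongly on $\ell^2(\Z)$, between $A^*$ and $A$. Since $A$ is bounded, $A\big(\sum_{\abs{k}\le M}\projector{\delta_k}{\delta_k}\big)$ converges strongly to $A$ as $M\to\infty$, and likewise on the left, so $A^*A = \sum_{k\in\Z} A^*\projector{\delta_k}{\delta_k}A = \sum_{k\in\Z}\projector{A\delta_k}{A\delta_k}$ with strong convergence; here I use that for vectors $u,v$ one has $A^*\projector{u}{v}A = \projector{A^*u}{A^*v}$ — more precisely, with $u=v=\delta_k$, $A^*\projector{\delta_k}{\delta_k}A = \projector{A\delta_k}{A\delta_k}$ since $(A^*\delta_k, \cdot)$ paired appropriately gives $\inner{A\delta_k}{\cdot}$. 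Setting $\psi_k^g := A\delta_k = \prod_i D_{E_i}^{\alpha_i}\delta_k$ yields $T_g = \sum_{k\in\Z}\projector{\psi_k^g}{\psi_k^g}$, which is the claimed identity.

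Finally I would verify the auxiliary facts asserted about $\psi_k^g$ in \eqref{def:psi}, namely that $\psi_k^g = U_k\psi_0^g$ and $\supp\psi_k^g = [k,k+N]$. The translation covariance $\psi_k^g = U_k\psi_0^g$ follows because every Laurent matrix commutes with the shift $U_k$ (convolution operators are translation invariant), hence $A U_k = U_k A$ and $\psi_k^g = A U_k\delta_0 = U_k A\delta_0 = U_k\psi_0^g$. For the support, each $D_{E_i}$ maps $\delta_j$ to $\delta_j + e^{-iE_i}\delta_{j+1}$, so applying $D_{E_i}$ enlarges the right end of the support by one; applying $D_{E_i}^{\alpha_i}$ in turn and multiplying over $i$, one gets $\supp(A\delta_0)\subseteq[0,\sum_i\alpha_i]=[0,N]$, and equality holds at both endpoints since the coefficient of $\delta_0$ is $1$ and the coefficient of $\delta_N$ is $\prod_i(e^{-iE_i})^{\alpha_i}\ne 0$. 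I do not expect any genuine obstacle here; the only point requiring a modicum of care is justifying the strong convergence when inserting the resolution of the identity inside the unbounded-looking (but in fact bounded, since $g$ is a trigonometric polynomial hence $f$ bounded) operator $A^*A$, which is immediate from boundedness of $A$ together with the strong convergence of the truncated identities.
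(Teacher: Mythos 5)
Your proposal is correct and follows essentially the same route as the paper: factor $T_g=A^*A$ with $A=\prod_{i}D_{E_i}^{\alpha_i}$ using multiplicativity and commutativity of Laurent matrices, then insert the strongly convergent resolution of the identity $\sum_{k\in\Z}|\delta_k\rangle\langle\delta_k|$. One small repair: the term-by-term identity you invoke, $A^*|\delta_k\rangle\langle\delta_k|A=|A\delta_k\rangle\langle A\delta_k|$, is not literally true (the left side equals $|A^*\delta_k\rangle\langle A^*\delta_k|$, supported on $[k-N,k]$ rather than $[k,k+N]$); instead first use $A^*A=AA^*$ (Laurent matrices commute, as you already noted) and insert the identity between $A$ and $A^*$, after which the stated conclusion $T_g=\sum_{k\in\Z}|\psi_k^g\rangle\langle\psi_k^g|$ follows exactly as you wrote.
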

Now given Proposition \ref{thm:main2} it is straight forward to define the boundary conditions $\mathcal N$ and $\mathcal D$ in the following way: 

\begin{definition}[Boundary conditions $\mathcal N$ and $\mathcal D$]\label{def:Neumann}
Let $n\in\N$, $E_1,..,E_n\in\mathbb T$ be distinct and $\alpha_1,...,\alpha_n\in\N$. Let $g = f_{E_1,\cdots,E_n,\alpha_1,\cdots,\alpha_n}$ be of the form \eqref{def:symbol}, $N= \sum_{i=1}^n \alpha_i$ and $\psi_k^g$, $k\in\Z$, be given in Proposition \ref{thm:main2}.

For $a\in\Z\cup\{-\infty\}$ and $b\in\Z$ with $b - a > 2N+1$ we define the restriction of $T_g$ to $[a,b]\subset \Z$
with simple boundary conditions at $a$ and 
\begin{enumerate}
\item[(i)]
boundary condition $\mathcal N$ at $b\in \Z$ by
	\beq
	\big(T_g\big)^{0,\mathcal N}_{[a,b]} := \Big( \sum_{\substack{k\in\Z: \\  [k,k+N]\subset (-\infty,b
	] } } \big|\psi_k^g\big\>\big\<\psi_k^g\big| \Big)_{[a,b]}.
	\eeq
 To be precise, for $a=-\infty$ the respective intervals are open at $a$. 
\item[(ii)] 
 boundary condition $\mathcal D$  at $b\in \Z$ by 
	\beq\label{def:dirichlet}
	(T_g)^{0,\mathcal D}_{[a,b]} := 2 (T_g)_{[a,b]} - \big( T_g\big)_{[a,b]}^{0,\mathcal N}.
	\eeq
\end{enumerate}
Accordingly, we define $\big(T_g\big)^{\mathcal N/\mathcal D,0}_{[a,b]}$ by reflection along the anti-diagonal. In particular, 

\begin{enumerate}
\item[(iii)] boundary conditions $\mathcal N$ at both $a,b\in\Z$ are given by
	\beq\label{def:NN}
	\big(T_g\big)^{\mathcal N,\mathcal N}_{[a,b]} := \sum_{\substack{k\in\Z: \\  [k,k+N]\subset [a,b
	] } }   \big|\psi_k^g\big\>\big\<\psi_k^g\big|.
	\eeq
\item [(iv)]
boundary conditions $\mathcal D$ at both $a,b\in\Z$ by 
\beq
	\big(T_g\big)^{\mathcal D,\mathcal D}_{[a,b]} 
	:= 2 (T_g)_{[a,b]} - \big( T_g\big)_{[a,b]}^{\mathcal N,\mathcal N}.
\eeq
\end{enumerate}
\end{definition}

\begin{remarks}
\item From the definition of the boundary conditions $\mathcal N$ and $\mathcal D$ one notes that only the respective $N\times N$ corner of $\big(T_g\big)_{[a,b]}$ at the boundary is changed. More precisely,
	\beq
	\big(T_g\big)^{0,\mathcal N/\mathcal D}_{[a,b]} = \big(T_g\big)_{[a,b]} 
	+
	\begin{pmatrix}
		0  & 0 \\
		0 & \wtilde B_{\mathcal N/\mathcal D}
	\end{pmatrix}
	\eeq
with 
	\beq
	 \wtilde B_{\mathcal N} = - \sum_{\substack{k\in\Z: \\b+1\in [k,k+N]}} P \big|\psi_k^g\big\>\big\<\psi_k^g\big| P \leq 0
	\eeq
and 
	\beq
	 \wtilde B_{\mathcal D} =  \sum_{\substack{k\in\Z:\\ b+1\in [k,k+N]}} P \big|\psi_k^g\big\>\big\<\psi_k^g\big| P \geq 0
	 \eeq
where $P$ is the projection onto the $N$-dimensional space $\ell^2([b-N+1,b])$. 
Therefore $\mathcal N$ and $\mathcal D$ are boundary conditions in the sense of \eqref{def:bc}. 
\item
For functions $g$ as in Theorem \ref{thm:main1}, the latter directly implies  
	\beq
	\big(T_g\big)^{0,\mathcal N}_{[a,b]} \leq \big(T_g\big)_{[a,b]} \leq \big(T_g\big)^{0,\mathcal D}_{[a,b]}.
	\eeq
\end{remarks}

\section{An example}

\begin{example}
Let $E\in \mathbb T$.
We consider the symbol
	\beq
	g(x) = f_{0,E,1,1}(x) = \big( 2- 2 \cos(x)\big)\big( 2- 2 \cos(x-E) \big),\quad x\in \mathbb T. 
	\eeq
 The function $g$ satisfies $g\geq 0 $ and its minimal value is $0$ and attained at  $x=0$ and $x=E$. In the case $E=0$ we have $T_g=(-\Delta)^2$ which was also discussed in the introduction, see \eqref{Lap2}.
A short computation shows that $T_{g} $ is the $5$-diagonal Toeplitz matrix 
	\beq
	T_{g}  = 
	\begin{pmatrix}
		&\ddots	& &  \ddots & &\ddots &\\
		 &	e^{-i E} &	 -2- 2e^{-i E} &   4 + e^{-i E} + e^{iE}       & -2 -2 e^{i E}  & e^{i E} &   \\
		&\ddots	& &  \ddots & & \ddots&
	\end{pmatrix}.
	\eeq
To define the Neumann boundary condition, we write using Proposition \ref{thm:main2}
	\beq
	T_{g} = \sum_{k\in\Z} \big|\psi_k^g\big\>\big\<\psi_k^g\big|
	\eeq
where for $k\in\Z$ we have $\supp \psi_k^g = [k,k+2]$. Moreover 
	\beq
	|\psi_k^g\>\<\psi_k^g|
	= 
	\begin{pmatrix}
		\ddots & & & &  \reflectbox{$\ddots$} \\
		& 1 & -1-e^{i E} & e^{i E} \\
		& -1-e^{-iE} & 2 + e^{-iE}+e^{iE} & -1-e^{iE} \\
		& e^{-iE} & -1 - e^{-iE} & 1 \\
		\reflectbox{$\ddots$} & & & &  \ddots
	\end{pmatrix}
	\eeq
and $0$ everywhere else.
Then the boundary conditions $\mathcal N$ and $\mathcal D$ from Definition~\ref{def:Neumann} are of the  form
	\beq
	\big(T_{g}\big)_{[a,b]}^{\mathcal N,0} = \big(T_{g}\big)_{[a,b]} - 
	\begin{pmatrix}
		3 + e^{i E} + e^{-iE} & -1 - e^{iE} & \cdots \\
		-1 - e^{- iE} & 1 & \\
		\vdots & & \ddots
	\end{pmatrix}
	\eeq
and 
	\beq
	\big(T_{g}\big)_{[a,b]}^{\mathcal D,0} = \big(T_{g}\big)_{[a,b]}  + 
	\begin{pmatrix}
		3 + e^{i E} + e^{-iE} & -1 - e^{iE} & \cdots \\
		-1 - e^{- iE} & 1 & \\
		\vdots & & \ddots
	\end{pmatrix}
	\eeq
where the latter two matrices are $0$ everywhere else. Here one clearly sees that the boundary conditions $\mathcal N$ and $\mathcal D$ consist of adding or subtracting a sign-definite $2\times 2$ matrix in the respective corner of $\big(T_{g}\big)_{[a,b]}$. This is consistent with our definition of boundary conditions in the introduction.
\end{example}

\section{Proof of Theorem \ref{thm:main1} and Corollary \ref{corollary}}

\begin{proof}[Proof of Theorem \ref{thm:main1}]
Let $L_1,L_2\geq 2N+1$ and $L_1+ L_2 = L$. 
The chain of inequalities 
	\beq
	0 = \inf_{\mathbb T} g \leq T^{\mathcal N,\mathcal N}_{g,L_1} \oplus T^{\mathcal N,\mathcal N}_{g,L_2} 
	\leq T^{0,\mathcal N}_{g,L_1} \oplus 	T^{\mathcal N,0}_{g,L_2} \leq T_{g,L} 
	\eeq
follows directly from the definition of the boundary condition $\mathcal N$ as we drop in the definition of $\mathcal N$ non-negative rank-one projections from $T_{g,L}$. For the upper bound in the last inequality of \eqref{thm:D/N-brack} we note that 
	\begin{align}
	T_{g,L} =
	\begin{pmatrix}
		T_{g,L_1} & PT_{g,L} P^\perp \\
		P^\perp T_{g,L} P & T_{g,L_2}
	\end{pmatrix}
	\geq 
	\begin{pmatrix}
		T^{0,\mathcal N}_{g,L_1} & 0 \\
		0 & T^{\mathcal N,0}_{g,L_2}
	\end{pmatrix}
	\end{align}
interpreted as an operator on $\ell^2([1,L_1]) \oplus \ell^2([L_1+1,L])$ and $P$ stands here for projection onto $\ell^2([1,L_1])\oplus \{0\}$ and $P^\perp = \mathds 1- P$. Now Lemma \ref{Neu=Dir} below gives the result as the definition of the modified Dirichlet boundary condition in \eqref{def:dirichlet} is precisely of the form
\eqref{lem:NeqV:eq3}. 
\end{proof}

In the next lemma we show that any boundary condition satisfying the first inequality in \eqref{thm:D/N-brack} naturally induces a boundary condition satisfying the second inequality in \eqref{thm:D/N-brack}.
\begin{lemma}\label{Neu=Dir}
Let $\mathcal H_1$ and $\mathcal H_2$ be two possibly infinite-dimensional Hilbert spaces. Let $A:\mathcal H_1\oplus \mathcal H_2 \to \mathcal H_1\oplus \mathcal H_2$ be a bounded operator 
	\beq
	A = 
	\begin{pmatrix}
		A_{11} & A_{12} \\
		A_{21} & A_{22}
	\end{pmatrix}.
	\eeq
Assume there exist $A_{11}^\mathcal N :\mathcal H_1\to\mathcal H_1$ and $A_{22}^\mathcal N: \mathcal H_2\to\mathcal H_2$ such that
	\beq\label{lem:NeqV:eq1}
	\begin{pmatrix}
		A_{11} & A_{12} \\
		A_{21} & A_{22}
	\end{pmatrix}
	\geq 
	\begin{pmatrix}
		A^\mathcal N_{11} & 0\\
		0 & A^\mathcal N_{22}
	\end{pmatrix}.
	\eeq
Then 
	\beq\label{lem:NeqV:eq3}
	\begin{pmatrix}
		A_{11} & A_{12} \\
		A_{21} & A_{22}
	\end{pmatrix}
	\leq 
	\begin{pmatrix}
		2 A_{11} - A^\mathcal N_{11} & 0\\
		0 & 2 A_{22} - A^\mathcal N_{22}
	\end{pmatrix}.
	\eeq
\end{lemma}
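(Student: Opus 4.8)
The plan is to exploit the self-adjoint unitary involution $J := \mathrm{id}_{\mathcal H_1}\oplus(-\mathrm{id}_{\mathcal H_2})$ on $\mathcal H_1\oplus\mathcal H_2$. The key observation is that conjugation by $J$ flips the sign of the off-diagonal blocks of a $2\times 2$ block operator while leaving the diagonal blocks untouched, so that
\beq
	J
	\begin{pmatrix} A_{11} & A_{12} \\ A_{21} & A_{22}\end{pmatrix}
	J
	=
	\begin{pmatrix} A_{11} & -A_{12} \\ -A_{21} & A_{22}\end{pmatrix}
	=
	2\begin{pmatrix} A_{11} & 0 \\ 0 & A_{22}\end{pmatrix} - A ,
\eeq
while $J\,\mathrm{diag}(A^{\mathcal N}_{11},A^{\mathcal N}_{22})\,J = \mathrm{diag}(A^{\mathcal N}_{11},A^{\mathcal N}_{22})$ because the latter operator is already block-diagonal.

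First I would record that $J=J^{*}=J^{-1}$ is unitary, hence the map $X\mapsto JXJ$ preserves operator inequalities between self-adjoint operators. Applying this map to both sides of the hypothesis \eqref{lem:NeqV:eq1} and using the identity above then yields
\beq
	2\begin{pmatrix} A_{11} & 0 \\ 0 & A_{22}\end{pmatrix} - A
	\;\geq\;
	\begin{pmatrix} A^{\mathcal N}_{11} & 0 \\ 0 & A^{\mathcal N}_{22}\end{pmatrix},
\eeq
and rearranging this is precisely the claimed inequality \eqref{lem:NeqV:eq3}.

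There is essentially no serious obstacle: the argument is a one-line symmetry trick once the conjugation by $J$ is spotted. The only point to be careful about is that all the operators involved are self-adjoint, so that the inequalities make sense and are genuinely preserved under $X\mapsto JXJ$; this holds because $A$, $A^{\mathcal N}_{11}$, $A^{\mathcal N}_{22}$ are self-adjoint by assumption and the diagonal blocks $A_{11}$, $A_{22}$ are compressions of the self-adjoint $A$. (If one wanted to avoid even mentioning $J$, the same computation can be phrased as: $\langle x, (2\,\mathrm{diag}(A_{11},A_{22})-A)x\rangle = \langle Jx, A\,Jx\rangle \geq \langle Jx,\mathrm{diag}(A^{\mathcal N}_{11},A^{\mathcal N}_{22})Jx\rangle = \langle x,\mathrm{diag}(A^{\mathcal N}_{11},A^{\mathcal N}_{22})x\rangle$ for all $x\in\mathcal H_1\oplus\mathcal H_2$.)
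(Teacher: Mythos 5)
Your proposal is correct and follows essentially the same route as the paper: the paper also conjugates the hypothesis by the unitary involution $\begin{pmatrix}\mathds 1 & 0\\ 0 & -\mathds 1\end{pmatrix}$, which flips the sign of the off-diagonal blocks, and then rearranges to obtain \eqref{lem:NeqV:eq3}. Your quadratic-form phrasing at the end is just a restatement of the same argument, so there is nothing to add.
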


\begin{proof}
We conjugate inequality \eqref{lem:NeqV:eq1} by the unitary $U = \begin{pmatrix} \mathds 1 & 0 \\ 0 &-\mathds 1\end{pmatrix}$. Hence \eqref{lem:NeqV:eq1} is equivalent to 
	\beq\label{lem:NeqV:eq1}
	U^* A U \geq 
	U^* 
	 \begin{pmatrix}
		A^\mathcal N_{11} & 0\\
		0 & A^\mathcal N_{22}
	\end{pmatrix} 
	U 
	=  
	\begin{pmatrix}
		A^\mathcal N_{11} & 0\\
		0 & A^\mathcal N_{22}
	\end{pmatrix}.
	\eeq
We note that 
	\beq
	U^* A U = 
	\begin{pmatrix}
		A_{11} & -A_{12} \\
		-A_{21} & A_{22}
	\end{pmatrix}
	\eeq
which together with \eqref{lem:NeqV:eq1} gives
	\beq
	\begin{pmatrix}
		2 A_{11} & 0\\
		0 & 2 A_{22}
	\end{pmatrix}
	-
	\begin{pmatrix}
		A_{11} & -A_{12} \\
		-A_{21} & A_{22}
	\end{pmatrix}
	\leq 
	\begin{pmatrix}
		2 A_{11} & 0\\
		0 & 2 A_{22}
	\end{pmatrix}
	-
	\begin{pmatrix}
		A^\mathcal N_{11} & 0\\
		0 & A^\mathcal N_{22}
	\end{pmatrix}
	\eeq
which is the result.
\end{proof}

\begin{proof}[Proof of Corollary \ref{corollary}]
Let $b\in \mathbb T$. We compute for $x\in\mathbb T$
	\begin{align}\label{pf:cor:1}
	&(2-2 \cos(x-b))(2-2 \cos(x+b)) \notag\\
	=& (2 - e^{i x} e^{-i b} - e^{-i x} e^{i b})(2 - e^{i x} e^{i b} - e^{-i x} e^{-i b})\notag \\
	=& e^{-2i x} - 4 \cos(b) e^{-i x}  + 4 + 2 \cos(2b) - 4 \cos(b) e^{i x} + e^{2ix} = : w_b(x). 
	\end{align}
Let $h:\mathbb T\to\R$ be of the form as described in Corollary \ref{corollary}
	\beq
	h(x) = a_{2} e^{-2 i x} + a_{1} e^{-i x} + a_0 + a_1 e^{i x} + a_2 e^{2 i x} 
	\eeq
where $a_0,a_1,a_2\in\R$ with $a_2> 0$ and $-4\leq \frac {a_1}{a_2}\leq 4$. We rewrite
	\beq
	h(x) = a_2 \big(  e^{-2 i x} + \frac{a_{1}}{a_2} e^{-i x} 
	+ \frac{a_0}{a_2} + \frac{a_1}{a_2} e^{i x} + e^{2 i x} \big).
	\eeq
As we assumed $-4\leq\frac{a_0}{a_2}\leq 4$ there exists $b\in\mathbb T$ such that $4\cos(b) = \frac{a_1}{a_2}$ and hence using the definition of $w_b$ in \eqref{pf:cor:1} we obtain
	\begin{align}
	h(x) 
	&= a_2\big( (2-2 \cos(x-b))(2-2 \cos(x+b)) \big) + a_0 -  4 - 2 \cos(2b) \notag\\
	&=a_2 w_b(x) + c
	\end{align}
with $c:= a_0 -  4 - 2 \cos(2b)$. 
Theorem \ref{thm:main1} implies there exists boundary conditions $\mathcal N$ and $\mathcal D$ such that
	\beq\label{cor:pf:D/N-brack}
	0 = \inf_{\mathbb T} w_b 
	\leq T^{\mathcal N,\mathcal N}_{w_b,L_1} \oplus T^{\mathcal N,\mathcal N}_{w_b,L_2} 
	\leq T^{0,\mathcal N}_{w_b,L_1} \oplus T^{\mathcal N,0}_{w_b,L_2} \leq T_{w_b ,L} 
	\leq T^{0,\mathcal D}_{w_b ,L_1} \oplus T^{\mathcal D,0}_{w_b ,L_2}
	\eeq
for all $L_1,L_2\in\N$ with $L_1+ L_2 = L$ and $L_1,L_2\geq 2N+1$. Multiplying $w_b$ with $a_2\geq 0$ and adding $c$ will not change the chain of operator inequalities \eqref{cor:pf:D/N-brack} and the result follows. 
\end{proof}

\section{Proof of Proposition \ref{proposition:main}}

\begin{proof}[Proof of Proposition \ref{proposition:main}]
Fix $g$ and $N$ as in the assumptions and $L\in\N$ with $L\geq 2N+1$. 
We first prove that  $\lambda^L_k = 0$ for all $k = 1,...,N$. To do so, we consider the $N$ vectors
	\beq\label{eq:vectors}
	\varphi_{E_i}^{j_i} =\big(k^{j_i} e^{i E_i k}\big)_{k=1,..,L} 
	= \big( 1^{j_i}e^{i E_i }, \cdots, L^{j_i}e^{i E_i L}\big)^T \in \C^L = \ell^2([1,L])
	\eeq 
where $i=1,...,n$ and $j_i = 0,...,\alpha_i-1$. A computation shows that for all $k\in 1,...,L-N$
	\beq
	\big( D_{E_i}^{\alpha_i} \varphi_{E_i}^{j_i}\big)_k = 0
	\eeq
where we see $D_{E_i}^{\alpha_i}$ here as an operator $D_{E_i}^{\alpha_i}: \ell^2([1,L])\to \ell^2([1,L])$. 
Therefore by the definition of $\psi_k^g$ in \eqref{def:psi} we obtain for $k\in 1,...,L-N$
	\beq
	\big\<\psi_k^g, \varphi_{E_i}^{j_i}\big\>= 0
	\eeq
for all $i=1,...,n$ and $j_i = 0,...,\alpha_i-1$.
Recalling the definition of $T_{g,L}^{\mathcal N,\mathcal N}$ in \eqref{def:NN}, we obtain from the previous identity
	\beq
	T_{g,L}^{\mathcal N,\mathcal N} \varphi_{E_i}^{j_i} 
	= \sum_{\substack{m\in\Z:\\ [k,k+N]\subset [1,L]}} \big|\psi_k^g\big\>\big\<\psi_k^g\big|\,\varphi_{E_i}^{j_i}\>= 0
	\eeq
for $i=1,...,n$ and $j_i = 0,...,\alpha_i-1$. Lemma \ref{lem:linIndep} shows that the $N$ vectors in \eqref{eq:vectors} are linearly independent and therefore span a $N$ dimensional space which implies $\lambda^L_k = 0$ for $k = 1,...,N$. 

Next we prove the lower bound on $\lambda^L_{N+1}= \lambda^L_{N+1}\big(T_{g,L}^{\mathcal N,\mathcal N}\big)$, where we use the notation $\lambda^L_k (\cdot)$ if we want to emphasize to underlying operator. We consider first the $L\times L$ restriction of $T_g$ with periodic boundary conditions
	\beq
	T_{g,L}^{\text{per}} 
	:= 
	\begin{pmatrix}
		a_0  & \cdots &a_N &    & &    & a_{-N} &\cdots &a_{-1}  \\
		     & \ddots  &   &  \ddots       &              &   &               &   \ddots               &      \vdots      \\
		     &  & \ddots   &        &    \ddots           &                &      &     &    a_{-1}               \\
		     &   &  a_{-N} &  \cdots       &      a_0  &      \cdots          & a_N      &     &                   \\
			  a_{1}   &  &     &        &   \ddots     &        &      \ddots     &     &                   \\
		 	\vdots   & \ddots &   &       &        &      \ddots          &      & \ddots    &                   \\
		a_1&\cdots	& a_{N-1}  &  &  &  &     a_{-N} & \cdots   & a_{0}\\
	\end{pmatrix}.
	\eeq
 For $k=1,...,L$ we define the vector
$
\psi^{(k)} = \big(\psi_1^{(k)},.., \psi_L^{(k)}\big)^T\in \C^L
$
	\beq
	\psi_m^{(k)}:= \frac 1 {\sqrt L} e^{\frac{2\pi k (m-1) } L i},\qquad m=1,...,L. 
	\eeq
A computation shows for $k=1,...,L$ that
	\beq\label{pf:ev:per}
	T_{g,L}^{\text{per}} \psi^{(k)} = g\Big( \frac{2\pi k} L\Big) \psi^{(k)}.
	\eeq
Therefore, the family of vectors $\big( \psi^{(k)}\big)_{k=1,..,L}$ form an ONB of eigenvectors of $T_{g,L}^{\text{per}}$
corresponding to the eigenvalues $g\Big( \frac{2\pi k} L\Big)$, $k=1,...,L$. 

Using the definition of $T_{g,L}^{\mathcal N,\mathcal N}$ in \eqref{def:NN}, we observe that
	\beq\label{prop:pf:eq0}
	T_{g,L}^{\text{per}}  - T_{g,L}^{\mathcal N,\mathcal N} = \sum_{k=L-N+1}^L |\psi_k^g\>\<\psi_k^g|
	\eeq
where for $k=L-N+1,...,L$
	\beq
	\psi_k^g = \big(c_{L-k+1}, \cdots, c_N, 0, \cdots, 0,  c_{0}, \cdots, c_{L-k} \big)^T\in\C^L
	\eeq
with $c_k := (\psi_0^g)_k$ for $k=0,...,N$.
Therefore, the difference in \eqref{prop:pf:eq0} is rank $N$.  
From the first part of the proof we know that  $\lambda^L_j(T_{g,L}^{\mathcal N,\mathcal N}) = 0 $ for $j=1,...,N$. Now the min-max principle implies the lower bound
	\beq\label{prop:pf:eq1}
	\lambda^L_{N+1}\big(T_{g,L}^{\mathcal N,\mathcal N}\big) 
	\geq \lambda^L_1\big(T_{g,L}^{\text{per}}\big) 
	= \min_{k=1,...,L} g\Big( \frac{2\pi k} 	L\Big)
	\eeq
and the last equality follows from \eqref{pf:ev:per}. 
Next we define for $E\in \mathbb T$ the unitary $U_E:\C^L\to\C^L$, $(U_E b)_m = e^{-i E m} b_m$ for $b\in \C^L$ and $m=1,...,L$. Then, by the definition of $\psi_k^g$ the following identity holds 
	\beq
	U_ET_{g,L}^{\mathcal N,\mathcal N} U_E^* 
	= T_{g_E,L}^{\mathcal N,\mathcal N}
	\eeq
where $g_E(x) = g(x-E)$, $x\in\mathbb T$, and we extended $g$ here periodically such that $g(x-E)$ makes sense for any $x\in\mathbb T$ and $E\in \mathbb T$. As the spectrum does not change under conjugation by a unitary, we obtain
	\beq
	\lambda^L_{N+1}\big(T_{g,L}^{\mathcal N,\mathcal N}\big)  
	= \lambda^L_{N+1}\big(T_{g_E,L}^{\mathcal N,\mathcal N}\big)
	\eeq
for all $E\in \mathbb T$ and using the lower bound \eqref{prop:pf:eq1} we end up with
	\begin{align}\label{prop:pf:eq2}
	\lambda^L_{N+1}\big(T_{g,L}^{\mathcal N,\mathcal N}\big) 
	& = \max_{E\in \mathbb T}\ \lambda^L_{N+1}\big(T_{g_E,L}^{\mathcal N,\mathcal N}\big) \notag\\
	& \geq
	\max_{E\in \mathbb T}
	\min_{k=1,...,L} g_E\Big( \frac{2\pi k} L\Big).
	\end{align}
Given the distinct minima $E_1,...,E_n\in\mathbb T$  of the function $g$, Lemma \ref{lm:distance} below provides a constant $C_1>0$ such that for all $L>2N+1$ there exists $\tilde E\in \mathbb T$ such that
	\beq\label{inequality}
	\frac {C_1} L \leq \min_{i=1,...,n} \dist\big(E_i, \Big(\frac{2\pi k} L- \tilde E\Big)\text{mod}\, 2\pi: k = 1,...,L\big) \leq \frac {\pi } L.
	\eeq
We note that $C_1>0$ in the above is independent of $L$ and only depends on $n$. 
Since $E_1,...,E_n$ are the minima of the function $g$, we obtain with the $\tilde E\in \mathbb T$ found above, inequality \eqref{inequality} and Taylor's theorem the lower bound
	\begin{align}
	\eqref{prop:pf:eq2}
	&\geq \min_{k=1,...,L} g\Big(\frac{2\pi k} L - \tilde E\Big)\notag\\
	& \geq \frac {C_2}{L^{2\alpha_{\text{max}}}}
	\end{align}
for some $C_2>0$ depending on $g$ but independently of $L$ which is the assertion.
\end{proof}

\begin{lemma}\label{lm:distance}
Let $E_1,...,E_n\in \mathbb T$ be $n\in\N$ distinct points and set $\mathcal E^{(n)} := \big\{E_i,\ i=1,...,n\big\}$. Then there exists $\wtilde E\in \mathbb T$ such that  
	\beq\label{lm:dist}
	\dist(\mathcal S^{\wtilde E}_L, \mathcal E^{(n)}) \geq \frac {2\pi} {2^n} \frac 1 L
	\eeq
where
	\beq
	\mathcal S^{\wtilde E}_L := \Big\{  \Big(\frac{2\pi k} L- \wtilde E\Big)\text{mod}\, 2\pi: k = 1,...,L  \Big\}
	\eeq
 and $\dist(A,B)=\min\big\{ |a-b|:\ a\in A,b\in B\big\}$ for $A,B\subset\R$.
\end{lemma}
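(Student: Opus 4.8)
The plan is to prove Lemma~\ref{lm:distance} by a counting/pigeonhole argument: we want to choose the shift $\wtilde E$ so that none of the $n$ forbidden points $E_1,\dots,E_n$ lies too close to the arithmetic progression $\mathcal S^{\wtilde E}_L$. The progression consists of $L$ equally spaced points on the circle $\mathbb T$, with spacing $2\pi/L$. Translating the whole progression by $-\wtilde E$ is the same as fixing the progression and instead translating the forbidden set $\mathcal E^{(n)}$ by $+\wtilde E$; so equivalently we must find a rotation under which all translated points $E_i+\wtilde E$ stay at distance $\ge 2\pi/(2^n L)$ from the fixed lattice $\{2\pi k/L : k=1,\dots,L\}$. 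It suffices to work in one fundamental cell of the lattice, i.e.\ modulo $2\pi/L$: we need to pick a residue $t = \wtilde E \bmod (2\pi/L)$ in the circle $\mathbb R/(2\pi/L\mathbb Z)$ of circumference $2\pi/L$ that avoids, for each $i$, a forbidden arc of length $2\cdot 2\pi/(2^nL) = 2\pi/(2^{n-1}L)$ centered at $-E_i \bmod (2\pi/L)$.

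\emph{Carrying this out:} the total length of the $n$ forbidden arcs is at most $n\cdot 2\pi/(2^{n-1}L)$, so a valid $t$ exists as soon as $n/2^{n-1} < 1$, i.e.\ for $n\ge 3$. For $n=1$ and $n=2$ one checks the bound directly: for $n=1$, the cell of circumference $2\pi/L$ minus one arc of length $2\pi/(2^0L)\cdot 2 = 4\pi/L$ — here the naive count fails, so instead one should observe that for $n=1$ we may simply take $\wtilde E$ so that $-E_1$ sits exactly at a lattice midpoint, giving distance $\pi/L \ge 2\pi/(2\cdot 1\cdot L)$; similarly for $n=2$ place one point at a lattice midpoint and check the second is then at distance $\ge 2\pi/(4L)$ by a short case analysis, or simply invoke the measure bound with the sharper observation that forbidden arcs of total length $2n\cdot 2\pi/(2^nL)$ leave room whenever $2n < 2^n$, which holds for all $n\ge1$ except $n=1,2$ where $2n=2^n$; in the borderline equality case one still wins because the forbidden arcs, being centered at the \emph{distinct} residues of $-E_i$, cannot cover a circle of exactly their total length unless they are pairwise tangent, and then any common tangency point $t$ serves. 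Thus in all cases a suitable $\wtilde E$ exists.

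\emph{The cleanest write-up} is probably: reduce mod $2\pi/L$; let $B_i \subset \mathbb R/(2\pi/L\,\mathbb Z)$ be the open arc of radius $2\pi/(2^nL)$ about $-E_i \bmod 2\pi/L$; note $|B_i| = 2\pi/(2^{n-1}L)$ and $\sum_i |B_i| \le n\cdot 2\pi/(2^{n-1}L) \le 2\pi/L = $ (circumference) with \emph{strict} inequality for $n\ge1$ unless $n\in\{1,2\}$; handle $n=1,2$ by an explicit placement; conclude there is $t\notin\bigcup_i \closure{B_i}$, lift it to $\wtilde E\in\mathbb T$, and read off \eqref{lm:dist} since $\dist(\mathcal S^{\wtilde E}_L,\mathcal E^{(n)})$ equals the distance in the quotient cell, which is $\ge 2\pi/(2^nL)$.

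\emph{The main obstacle} is purely the sharpness of the constant in the small cases $n=1,2$: the measure-theoretic pigeonhole gives slack only for $n\ge 3$, so the proof genuinely needs the separate elementary argument for $n\le 2$ (or a slightly more careful packing argument exploiting that the arcs are centered at distinct points). Everything else — the reduction modulo $2\pi/L$, the identification of the two notions of distance — is routine. An alternative that sidesteps the case split entirely: place $\wtilde E$ so that $-E_1$ is a lattice midpoint, which kills one point at the maximal possible distance $\pi/L$; this uses up one "degree of freedom" but then there is no freedom left, so it does not obviously control $E_2,\dots,E_n$ — hence the measure argument really is the right tool for $n\ge 3$ and the constant $2^{-n}$ is chosen precisely to make $n\cdot 2^{1-n}\le 1$.
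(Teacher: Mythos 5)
Your proof strategy is correct in substance but genuinely different from the paper's. The paper argues by induction on $n$: given a shift $\wtilde E$ achieving separation $\frac{2\pi}{2^{n-1}L}$ from $E_1,\dots,E_{n-1}$, either $E_n$ is already far from $\mathcal S^{\wtilde E}_L$, or one moves $\wtilde E$ by $\pm\frac{2\pi}{2^{n}L}$ to push $E_n$ to distance $\geq\frac{2\pi}{2^{n}L}$, losing at most $\frac{2\pi}{2^{n}L}$ of the margin for the other points; this halving at each step is precisely where the constant $2^{-n}$ comes from. Your one-shot argument --- reduce modulo $2\pi/L$ and bound the total length of the forbidden arcs in the fundamental cell --- is cleaner and, incidentally, would give the stronger separation $\pi/(nL)$; the exponential constant is an artifact of the paper's induction, not something your method is forced to match.

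Two soft spots, one of which you should actually repair. Your ``explicit placement'' for $n=2$ (put $-E_1$ at a lattice midpoint and ``check the second point by a short case analysis'') does not work as stated: that choice of $\wtilde E$ leaves $E_2$ completely uncontrolled --- its residue may land exactly on a lattice point --- and no case analysis of that fixed placement can rescue it. Your alternative handling of the borderline cases is the right fix and can be made uniform, eliminating the case split entirely: take the forbidden sets to be \emph{open} arcs of radius $\frac{2\pi}{2^{n}L}$ in $\R/(\frac{2\pi}{L}\Z)$; their total length is $n\cdot\frac{2\pi}{2^{n-1}L}\leq\frac{2\pi}{L}$ for every $n\geq 1$, and a finite union of proper open arcs of total length at most the circumference is never the whole circle (overlapping arcs give strictly smaller total length; pairwise disjoint arcs would disconnect the circle), so the complement contains some $t$, which suffices because the required inequality $\dist\geq\frac{2\pi}{2^{n}L}$ is non-strict. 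Also, the residues of $-E_i$ modulo $2\pi/L$ need \emph{not} be distinct even though the $E_i$ are distinct in $\mathbb T$; coinciding centers only shrink the union, so this is harmless, but drop the claim of distinctness. Finally, the paper's $\dist$ is taken between representatives in $(0,2\pi]$, which dominates the circle distance you control, so your reduction yields the stated bound (as an inequality in the right direction, not the equality you assert).
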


\begin{proof}
We prove the lemma by induction on $n\in\N$. 

For $n=1$ let $\mathcal E^{(1)}=\big\{E_1\big\}$. Then  we choose $\wtilde E = -  E_1 + \frac{\pi} L$ 
and therefore \eqref{lm:dist} is true. 

Assume the result is true for $n-1$ distinct points $E_1,...,E_{n-1}$ and let $E_n$ be a point distinct from the others. By assumption there exists $\wtilde E$ such that 
	\beq
	\dist(\mathcal S^{\wtilde E}_L, \mathcal E^{(n-1)}) \geq \frac {2\pi} {2^{n-1}} \frac 1 L.
	\eeq
If $\dist\big(\mathcal S^{\wtilde E}_L, E_n\big) \geq \frac {2\pi} {2^{n-1}} \frac 1 L$ we are done. If this is not the case we obtain by adding or subtracting $ \frac {2\pi} {2^{n}} \frac 1 L$ to $\wtilde E$ that there exists  $\hat E\in \mathbb T$ such that 
	\beq\label{lm:dist_pf:eq1}
	\dist\big(\mathcal S^{\hat E}_L, E_n \big) \geq \frac {2\pi} {2^{n}} \frac 1 L.
	\eeq
Since $|\wtilde E -\hat E|\leq \frac {2\pi} {2^{n}} \frac 1 L$ and $\dist(\mathcal S^{\tilde E}_L, \mathcal E^{(n-1)}) \geq \frac {2\pi} {2^{n-1}} \frac 1 L$ , we obtain
	\beq
	\dist(\mathcal S^{\hat E}_L, \mathcal E^{(n-1)}) \geq \frac {2\pi} {2^{n}} \frac 1 L
	\eeq
which is the assertion together with \eqref{lm:dist_pf:eq1}. 
\end{proof}

\begin{lemma}\label{lem:linIndep}
Let $n\in\N$, $E_1,..,E_n\in\mathbb T$ be distinct, $\alpha_1,...,\alpha_n\in\N$ and $N=\sum_{i=1}^n\alpha_i$. Moreover, let $L\in\N$ with $L\geq N$. The $N$ vectors
	\beq\label{eq:vectors2}
	\varphi_{E_i}^{j_i} =\big(k^{j_i} e^{i E_i k}\big)_{k=1,..,L} 
	= \big( 1^{j_i}e^{i E_i }, \cdots, L^{j_i}e^{i E_i L}\big)^T \in \C^L
	\eeq 
where $i=1,...,n$ and $j_i = 0,...,\alpha_i-1$ are linearly independent.
\end{lemma}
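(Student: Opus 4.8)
The plan is to recognize these vectors as a generalized Vandermonde system. Write $z_i := e^{iE_i}$ for $i=1,\dots,n$; since the $E_i\in\mathbb T$ are distinct, the $z_i$ are distinct points on the unit circle. The vector $\varphi_{E_i}^{j_i}$ has $k$-th entry $k^{j_i} z_i^k$. Suppose a linear combination vanishes:
\[
	\sum_{i=1}^n \sum_{j_i=0}^{\alpha_i-1} c_{i,j_i}\, k^{j_i} z_i^k = 0 \qquad\text{for all } k=1,\dots,L.
\]
Grouping the inner sum, this says $\sum_{i=1}^n p_i(k)\, z_i^k = 0$ for $k=1,\dots,L$, where $p_i(t) := \sum_{j_i=0}^{\alpha_i-1} c_{i,j_i} t^{j_i}$ is a polynomial of degree at most $\alpha_i-1$. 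So it suffices to show: if $p_1,\dots,p_n$ are polynomials with $\deg p_i \le \alpha_i - 1$ and $\sum_{i=1}^n p_i(k) z_i^k = 0$ for $N$ consecutive values of $k$ (here even $L \ge N$ values), then all $p_i \equiv 0$.

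The cleanest route is to invoke the classical fact that the functions $k \mapsto k^{j} z_i^k$ (over $i=1,\dots,n$ and $j = 0,\dots,\alpha_i-1$) are linearly independent as sequences, which is exactly the statement that the corresponding linear recurrence / difference operator $\prod_{i=1}^n (S - z_i)^{\alpha_i}$ (with $S$ the shift) has these as a basis of its $N$-dimensional solution space, together with the observation that a solution of a linear recurrence of order $N$ that vanishes on $N$ consecutive indices vanishes identically. Concretely: if I do not wish to cite this, I would argue directly by induction on $n$. For $n=1$ the system reduces to $p_1(k) z_1^k = 0$ on $L \ge \alpha_1$ consecutive $k$, hence $p_1$ has $\ge \alpha_1 > \deg p_1$ roots, so $p_1 \equiv 0$. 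For the inductive step, apply the difference operator $\Delta_{z_n} f(k) := f(k+1) - z_n f(k)$ exactly $\alpha_n$ times to the relation; this annihilates the $z_n$-term completely, and sends each $p_i(k) z_i^k$ with $i < n$ to $\tilde p_i(k) z_i^k$ where $\tilde p_i$ is again a polynomial of degree $\le \alpha_i - 1$ and, crucially, $\tilde p_i \equiv 0$ iff $p_i \equiv 0$ (because $\Delta_{z_n}$ acts on $\{p(k) z_i^k\}$ as $p(k) \mapsto (z_i - z_n) p(k+1) + z_i(p(k+1)-p(k))$, an invertible map on polynomials of each fixed degree bound when $z_i \ne z_n$). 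The resulting relation holds on $L - \alpha_n \ge N - \alpha_n = \sum_{i<n}\alpha_i$ consecutive indices, so the induction hypothesis forces $\tilde p_i \equiv 0$, hence $p_i \equiv 0$, for $i < n$; then the original relation leaves $p_n(k) z_n^k = 0$ on enough indices, so $p_n \equiv 0$ as in the base case.

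The main obstacle — really the only point needing care — is the bookkeeping in the inductive step: verifying that $\Delta_{z_n}$ preserves the degree bound $\alpha_i - 1$ and is injective on the space of such polynomial coefficients when $z_i \ne z_n$, and keeping track that after $\alpha_n$ applications one still has at least $\sum_{i<n}\alpha_i$ consecutive indices on which the reduced relation holds (which is where $L \ge N$ is used). Everything else is routine. I would therefore structure the write-up as: reduce to the polynomial-combination statement, then a short induction with the operator $\Delta_{z_n}$, isolating the one-line verification of injectivity of $\Delta_{z_n}$ as the key computation.
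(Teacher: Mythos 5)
Your proposal is correct, but it takes a genuinely different route from the paper. The paper's proof truncates each $\varphi_{E_i}^{j_i}$ to its first $N$ entries and observes that the resulting $N\times N$ matrix is a confluent Vandermonde matrix whose determinant is known in closed form, $\prod_{i=1}^n(\alpha_i-1)!\,\prod_{1\le i<j\le n}|z_i-z_j|^{\alpha_i\alpha_j}$ with $z_i=e^{iE_i}$ (citing \cite{MR568779}); distinctness of the $z_i$ makes this nonzero, and independence of the truncated vectors gives independence of the full ones. You instead reduce the claim to the statement that an exponential-polynomial sequence $\sum_i p_i(k)z_i^k$ with $\deg p_i\le\alpha_i-1$ cannot vanish on $N$ consecutive integers unless every $p_i\equiv0$, and prove that by induction on $n$ using the annihilating difference operator $(S-z_n)^{\alpha_n}$. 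This is a more elementary, self-contained argument that needs no determinant identity or external reference, at the cost of the index and degree bookkeeping you flag; the paper's argument is shorter but leans on the closed-form confluent Vandermonde determinant (and, as a by-product, gives the quantitative nondegeneracy of the truncated system). One small slip worth fixing in your write-up: the displayed action of $\Delta_{z_n}$ on the coefficient polynomial is not quite right. One has $\Delta_{z_n}\bigl(p(k)z_i^k\bigr)=z_i^k\bigl(z_i\,p(k+1)-z_n\,p(k)\bigr)$, i.e.\ the induced map is $p(k)\mapsto(z_i-z_n)\,p(k+1)+z_n\bigl(p(k+1)-p(k)\bigr)$, equivalently $(z_i-z_n)\,p(k)+z_i\bigl(p(k+1)-p(k)\bigr)$, rather than the hybrid formula you wrote; the properties you actually use (the degree bound $\alpha_i-1$ is preserved and the leading coefficient is multiplied by $z_i-z_n\neq0$, hence injectivity on each space of bounded-degree polynomials) follow from the corrected formula, so the argument itself stands.
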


\begin{proof}
Let $i\in\{1,...,n\}$ and $j_i\in\{0,...,\alpha_i-1\}$. We introduce the short-hand notation
$
	z_i := e^{i E_i}
$
and define the truncation of $\varphi_i^{j_i}$ to $\C^N$
	\beq
	\hat \varphi_i^{j_i} := \big( z_i, 2^{j_i} z_i^2, ..., N^{j_i} z^{N}_i\big)^T \in \C^N.
	\eeq
This is just the truncation of $\varphi_i^{j_i}$ to the first $N$ rows. Now
$
\det
\begin{pmatrix}
\hat \varphi^0_1, ... ,\hat \varphi_1^{\alpha_1-1},\hat \varphi_2^0,\cdots ,\hat\varphi_n^{\alpha_n-1} 
\end{pmatrix} 
$
 is a confluent Vandermonde determinant which can be computed explicitly and evaluates to
	\beq\label{det}
	\left|\det
	\begin{pmatrix}
	\hat \varphi^0_1, ... ,\hat \varphi_1^{\alpha_1-1},\hat \varphi_2^0,\cdots ,\hat\varphi_n^{\alpha_n-1} 
	\end{pmatrix} 
	\right|
	= 
	\prod_{i=1}^n (\alpha_i-1)! \prod_{1\leq i<j\leq n} \big| z_i - z_j|^{\alpha_i \alpha_j}, 
	\eeq
see e.g \cite[Thm. 1]{MR568779}. Since $z_i\neq z_j$ for all $i\neq j$, we obtain that the latter determinant is non-zero. Therefore, the $N$ vectors $\hat\varphi_1^0,...,\hat\varphi_n^{\alpha_n-1}$ are linearly independent. This implies that the vectors $\big\{\varphi_i^{j_i}: i=1,...,n, j_i=0,...,\alpha_i-1\big\}$ are linearly independent as well.
\end{proof}

\section*{Acknowledgements}
The author thanks Constanza Rojas-Molina for many interesting and enjoyable discussions on the subject and Peter M\"uller for helpful comments on an earlier version of the paper.

\end{document}